\documentclass{amsart}
\usepackage{amsmath}
\usepackage{amssymb}
\usepackage{color}
\usepackage{mathdots}

\usepackage{graphicx}
\usepackage{epsfig}
\usepackage{multirow}
\usepackage{colortbl}
\usepackage[table]{xcolor}

\definecolor{lightgray}{gray}{0.9}

\DeclareMathOperator{\Id}{Id}

\def\FF{\mathbb{F}}

\newtheorem{theorem}{Theorem}[section]
\newtheorem{lemma}[theorem]{Lemma}
\newtheorem{proposition}[theorem]{Proposition}
\newtheorem{corollary}[theorem]{Corollary}
\theoremstyle{definition}

\theoremstyle{remark}
\newtheorem{remark}[theorem]{Remark}
\numberwithin{equation}{section}

\def\separa{\hbox to 14 truecm{\hrulefill}}

\author[P. Danchev]{Peter Danchev}
\address{Institute of Mathematics and Informatics, Bulgarian Academy of Sciences, 1113 Sofia, Bulgaria}
\email{danchev@math.bas.bg}
\thanks{The first author was partially supported  by the BIDEB 2221 of T\"UB\'ITAK}

\author[E. Garc\'\i a]{Esther Garc\'\i a}
\address{Departamento de Matem\'{a}tica  Aplicada, Ciencia e Ingenier\'{\i}a de los Materiales y Tecnolog\'{\i}a Electr\'onica,
Universidad Rey Juan Carlos, 28933 M\'{o}s\-to\-les (Madrid), Spain}
\thanks{The second author and third authors were partially supported by Ayuda Puente 2023, URJC, and MTM2017-84194-P (AEI/FEDER, UE)}
\email{esther.garcia@urjc.es}

\author[M. G\'omez Lozano]{Miguel G\'omez Lozano}
\address{Departamento de \'Algebra, Geometr\'{\i}a y
Topolog\'{\i}a, Universidad de M\'alaga, 29071 M\'alaga, Spain}
\thanks{The three authors were partially supported by  the Junta de Andaluc\'{\i}a FQM264}
\email{miggl@uma.es}

\begin{document}

\title[Prescribed characteristic polynomials]{On prescribed characteristic polynomials}
\maketitle

\begin{abstract} Let $\mathbb{F}$ be a field. We show that given any $n$th degree monic polynomial $q(x)\in \mathbb{F}[x]$  and any  matrix $A\in\mathbb{M}_n(\mathbb{F})$ whose trace coincides with the trace of $q(x)$ and consisting in its main diagonal of $k$ 0-blocks of order one, with $k<n-k$, and an invertible non-derogatory block of order $n-k$, we can construct a square-zero matrix $N$ such that the characteristic polynomial of $A+N$ is exactly $q(x)$.
We also show that the restriction $k<n-k$ is necessary in the sense that, when the equality $k=n-k$ holds, not every characteristic polynomial having the same trace as $A$ can be obtained by adding a square-zero matrix.
Finally, we apply our main result to decompose matrices into the sum of a square-zero matrix and some other matrix which is either  diagonalizable, invertible, potent or torsion.
\end{abstract}

\bigskip
{\footnotesize \textit{Key words}: characteristic polynomial, square-zero matrix}

{\footnotesize \textit{2010 Mathematics Subject Classification}: 15A15, 15A21,  15A83}

\section{Introduction and Basic Facts}

Throughout the current article, we denote by $\mathbb{M}_n(\mathbb{F})$ the matrix ring consisting of all square matrices of size $n\times n$ over an arbitrary field $\mathbb{F}$.

The problem of completing a matrix, when some of its entries are fixed, such that the resulting matrix satisfies a prescribed characteristic polynomial is classical and has been studied by several authors; see, for more detailed information, the survey by G. Cravo \cite{C}. Back in 1958, H.K. Farahat and W. Ledermann showed that any matrix of the form
$$
A=\left(
  \begin{array}{c|ccc}
    0 & 0 & \dots & 0\\
    \hline
    0 &  &  & \\
    \vdots &  & C & \\
    0 &  &  & \\
  \end{array}
\right)\in  \mathbb{M}_{n}(\mathbb{F}),
$$
where $C\in \mathbb{M}_{n-1}(\mathbb{F})$ is non-derogatory, can be modified by adding a matrix of the form
$$
N=\left(
  \begin{array}{c|ccc}
    a & u_1 & \dots & u_{n-1}\\
    \hline
    v_1 &  &  & \\
    \vdots &  & {\bf 0}_{n-1,n-1} & \\
    v_{n-1} &  &  & \\
  \end{array}
\right)\in  \mathbb{M}_{n}(\mathbb{F})
$$
such that the characteristic polynomial of $A+N$ is any prescribed monic polynomial of degree $n$ (cf. \cite[Theorem 3.4]{FL}). Furthermore, this problem was generalized by G.N. Oliveira who raised the question of when a block matrix of the form
$$
\left(
  \begin{array}{c|c}
    A_{11} & A_{12} \\
    \hline
    A_{21} & A_{22} \\
  \end{array}
\right),
$$
where part of its blocks are known, can be completed to achieve any prescribed characteristic polynomial, and addressed it in several of his works (see \cite{O71}, \cite{O75}, \cite{O81}, \cite{O82}).

In particular, in his work \cite{O71}, Oliveira proved that any matrix of the form
$$
A=\left(
  \begin{array}{c|c }
     {\bf 0}_{k,k} &  {\bf 0}_{k,n-k} \\
    \hline
     {\bf 0}_{n-k,k} & A_{22}  \\
  \end{array}
\right)\in  \mathbb{M}_{n}(\mathbb{F}),
$$
where $A_{22}\in \mathbb{M}_{n-k}(\mathbb{F})$ is fixed, can be completed by adding a matrix of the form
$$
N=\left(
  \begin{array}{c|c  }
    X_{11} & X_{12}\\
    \hline
   X_{21} &  {\bf 0}_{n-k,n-k}\\
    \end{array}
\right)\in  \mathbb{M}_{n}(\mathbb{F})
$$
such that the characteristic polynomial of $A+N$ is any prescribed monic polynomial $q(x)$ of degree $n$ if, and only if, the product of certain elementary divisors of the matrix $A_{22}$ divides $q(x)$.

The prescription of several entries of a matrix together with a fixed characteristic polynomial has been recently used by S. Breaz and G. Călugăreanu when showing that every square matrix with nilpotent trace over a general ring can be expressed as the sum of three nilpotent matrices; see \cite{BrC}.

With the aim of addressing decomposition problems involving square-zero matrices, in this work we study the problem of modifying a matrix of the form
$$
A=\left(
  \begin{array}{c|c }
     {\bf 0}_{k,k} &  {\bf 0}_{k,n-k} \\
    \hline
     {\bf 0}_{n-k,k} & A_{22}  \\
  \end{array}
\right)\in  \mathbb{M}_{n}(\mathbb{F}),
$$
with fixed $A_{22}\in \mathbb{M}_{n-k}(\mathbb{F})$, by adding to it a square-zero matrix $N$ such that the characteristic polynomial of $A+N$ is any prescribed polynomial. Since square-zero matrices always have zero trace, the trace of $A+N$ equals the trace of $A$, and by this modification of $A$ we can only get characteristic polynomials having the same trace as $A$. We will show in the sequel that, when $k<n-k$ and $A_{22}$ is invertible and non-derogatory, given any monic degree $n$ polynomial $q(x)$ with the same trace as $A$ there exists a square-zero matrix $N$ such that the characteristic polynomial of $A+N$ is precisely $q(x)$. The first and last $k$ rows of such matrix $N$ will be non-zero in general, so this is not exactly a completion problem, because, in order to get the desired characteristic polynomial by adding a square-zero matrix to $A$, we modify the last $k$ rows of $A_{22}$.

We also illustrate that the restriction $k<n-k$ cannot be generally ignored in the sense that, when the equality $k=n-k$ holds, not every characteristic polynomial having the same trace as $A$ can be obtained by adding a square-zero matrix.

Further, in the last section of this work, we will apply this result to the problem of decomposing matrices into the sum of a square-zero matrix and a matrix satisfying some ``good" condition, where ``good" stands for the properties of being either diagonalizable, invertible, $n$-torsion or $n$-potent. This fits in our general project of decomposing square matrices into the sum of a square-zero matrix and another matrix satisfying some  properties such as been diagonalizable, potent, invertible, etc. (see, for instance, \cite{DGL1}, \cite{DGL2}, \cite{DGL3} and \cite{DGL4}).

\section{Main Theorem}

Recall that the {\it trace} of a monic polynomial $$q(x)=x^n+a_{n-1}x^{n-1}+\dots+a_0\in \mathbb{F}[x]$$ is $-a_{n-1}\in\mathbb{F}$. If such polynomial is the characteristic polynomial of a certain matrix, the trace coincides with the trace (i.e., the sum of the diagonal elements) of such matrix. Recall that a matrix $N$ is {\it nilpotent} if there exists $k\in \mathbb{N}$ with $N^k=0$, and it is called a {\it square-zero} matrix when $N^2=0$.

\begin{remark}\label{k0}
Given a non-derogatory matrix $A$, there always exists a square-zero matrix $N$ such that the characteristic polynomial of $A+N$ is any given monic polynomial whose trace coincides with the trace of $A$. In fact, we can suppose that $A$ is the companion matrix of a certain polynomial $$p(x)=x^n+u_{n-1}x^{n-1}+u_{n-2}x^{n-2}+\dots+u_0$$ thus:
$$
A=\left(
    \begin{array}{ccc c}
      0 & \dots &  0& -u_0 \\
      1 &  & 0 & \vdots \\
       & \ddots &  & -u_{n-2} \\
      0 &  & 1 & -u_{n-1} \\
    \end{array}
  \right).
$$ Given any polynomial $$q(x)=x^n+u_{n-1}x^{n-1}+v_{n-2}x^{n-2}+\dots+v_0$$ (where the trace of $q(x)$ coincides with the trace of $A$) it is enough to consider the square-zero matrix
$$
N=\left(
    \begin{array}{ccc c}
      0 &  &  0& u_0-v_0 \\
      0 &  & 0 & \vdots \\
      \vdots &   & \vdots & u_{n-2}-v_{n-2} \\
      0 &  &  0 & 0 \\
    \end{array}
  \right)
$$
to get that
$$
A+N=\left(
    \begin{array}{ccc c}
      0 &  & 0 & -v_0 \\
      1 &  & 0 & \vdots \\
       &  \ddots &  & -v_{n-2} \\
      0  &  &   1& -u_{n-1} \\
    \end{array}
  \right)
$$
has characteristic polynomial equal to $q(x)$.
\end{remark}

The goal of this section is to generalize the previous result to any invertible non-derogative matrix completed with $k$ rows/columns of zeros.
In order to do that, we begin with a  technical lemma.

\begin{lemma}\label{lemma2}
Let $\mathbb{F}$ be a field and let $a_1,\dots,a_n,u_2,\dots,u_{n}\in \FF$. Then the determinant of the matrix
$$A=
   \left(\begin{array}{ccccccc}
    a_1+x & a_2&a_3&\cdots&a_{n-2}&a_{n-1}&a_{n}  \\
     0 & x&0&\cdots&0&0& u_2 \\
     0 & 0&x&\cdots&0&0& u_3 \\
     0 & 0&0&\ddots&0&0& u_4 \\
     \vdots&\vdots&\vdots&\ddots&\ddots&\vdots&\vdots\\
     0 & 0&0&\cdots&0&x& u_{n-1} \\
     x & 0&0&\cdots&0&0&x+u_{n}  \\
\end{array}
\right)
$$
is $|A|=x^{n}+ u_{n}x^{n-1}+(a_1-a_{n})x^{n-1}+a_1u_{n}x^{n-2}+\sum_{i=2}^{n-1} a_iu_i x^{n-2}$.
\end{lemma}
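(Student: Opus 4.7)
The plan is to compute $|A|$ by Laplace expansion along the first column, which contains only two nonzero entries: $a_1+x$ at position $(1,1)$ and $x$ at position $(n,1)$. This gives
\[
|A| \;=\; (a_1+x)\,D_1 \;+\; (-1)^{n+1}\,x\,D_n,
\]
where $D_1$ and $D_n$ denote the $(n-1)\times(n-1)$ minors obtained by deleting column $1$ together with row $1$ or row $n$, respectively.

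The minor $D_1$ is upper triangular with diagonal entries $x,x,\dots,x,x+u_n$, so $D_1 = x^{n-2}(x+u_n)$. Consequently the first contribution expands as
\[
(a_1+x)\,x^{n-2}(x+u_n) \;=\; x^{n} + (a_1+u_n)\,x^{n-1} + a_1 u_n\, x^{n-2}.
\]

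For $D_n$, whose first row is $(a_2,\dots,a_n)$ and whose remaining rows carry $x$ on what is now the sub-diagonal and $u_j$ in the last column, I would perform a second Laplace expansion along that last column. The top entry $a_n$ leaves an $(n-2)\times(n-2)$ diagonal minor of $x$'s, and each $u_j$ with $2\le j\le n-1$ leaves a minor in which $a_j$ is the only nonzero entry of its column; a further expansion along that column leaves an $(n-3)\times(n-3)$ diagonal matrix of $x$'s. After tracking the signs, one finds
\[
D_n \;=\; (-1)^{n}\,a_n\,x^{n-2} \;+\; (-1)^{n-1}\,x^{n-3}\sum_{j=2}^{n-1} a_j u_j,
\]
so that $(-1)^{n+1}x\,D_n \;=\; -a_n x^{n-1} + x^{n-2}\sum_{j=2}^{n-1} a_j u_j$. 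Adding the two contributions and regrouping the terms of like degree recovers the claimed formula.

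The only real obstacle is clerical: the signs have to be bookkept carefully across the two nested cofactor expansions, since the cancellations between $(-1)^{n+1}$, $(-1)^{n}$ and $(-1)^{n-1}$ are what produce the final coefficient signs. As a sanity check I would verify the formula by hand for $n=3$, where the matrix is $3\times 3$ and the expansion is immediate; this confirms that the sign pattern is correct before carrying out the general computation.
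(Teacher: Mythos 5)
Your proof is correct, and it takes a somewhat different route from the paper's. You compute the determinant by a Laplace expansion along the first column (which has only the two nonzero entries $a_1+x$ and $x$), and then handle the minor $D_n$ by a second expansion along its last column followed by one more along the column containing only $a_j$; I have checked that your stated values $D_1=x^{n-2}(x+u_n)$ and $D_n=(-1)^n a_n x^{n-2}+(-1)^{n-1}x^{n-3}\sum_{j=2}^{n-1}a_ju_j$ are right, and they combine to give exactly the claimed formula. The paper instead passes to the field of fractions of $\mathbb{F}[x]$, adds $-\frac{a_s}{x}R(s)$ to the first row for $s=2,\dots,n-1$ to clear out $a_2,\dots,a_{n-1}$, and then expands along the middle columns to reduce everything to a single $2\times 2$ determinant. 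The paper's method concentrates all the bookkeeping into one row operation and one small determinant, at the cost of temporarily dividing by $x$ (harmless, since the determinant is a polynomial and $\mathbb{F}[x]$ embeds in its fraction field, but it is a device one has to justify); your method stays entirely inside $\mathbb{F}[x]$ and is more elementary in that sense, at the cost of the nested sign-tracking you rightly flag as the only delicate point. Writing out that sign computation explicitly (rather than deferring it to ``after tracking the signs'') would make the argument fully self-contained.
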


\begin{proof}
By $R(i)$ and $C(i)$, we respectively denote the $i^{\rm th}$-row and the $i^{\rm th}$-column of the involved matrix. To perform the calculations, we will assume that our matrix is a matrix with coefficients in the field of fractions of  $\mathbb{F}[x]$.

For $s=2,\dots, n-1$, we add $-\frac{a_{s}}{x}R(s)$ to row $R(1)$ as follows:
$$|A|=
   \left|\begin{array}{ccccccc}
    a_1+x & 0&0&\cdots&0&0&a_{n}-\sum_{s=2}^{n-1} \frac{a_{s}u_s}{x} \\
     0 & x&0&\cdots&0&0& u_2 \\
     0 & 0&x&\cdots&0&0& u_3 \\
     0 & 0&0&\ddots&0&0& u_4 \\
     \vdots&\vdots&\vdots&\ddots&\ddots&\vdots&\vdots\\
     0 & 0&0&\cdots&0&x& u_{n-1} \\
     x & 0&0&\cdots&0&0&x+u_{n}  \\
  \end{array}
\right|.
$$
By the expansion of the determinant using $C(2)$, \dots, $C(n-2)$, we get:
\begin{align*}
    |A|&=x^{n-2}\left|
             \begin{array}{cc}
               a_1+x  & a_{n}-\sum_{s=2}^{n-1} \frac{a_{s}u_s}{x}  \\
               x & x+u_{n} \\
             \end{array}
           \right|\\&=x^{n-2}(a_1+x)(x+u_{n})-x^{n-1}(a_{n}-\sum_{s=2}^{n-1} \frac{a_{s}u_s}{x})\\
           &=x^{n}+ u_{n}x^{n-1}+(a_1-a_{n})x^{n-1}+a_1u_{n}x^{n-2}+\sum_{i=2}^{n-1} a_iu_i x^{n-2}.
\end{align*}
\end{proof}

We are now prepared to establish the following chief result.

\begin{theorem}\label{teoremappal}
Let $\mathbb{F}$ be a field, let $n,k\in \mathbb{N}$ with $k<n-k$, and consider the block matrix
$$
A=\left(
         \begin{array}{c|c}
           {\bf 0}_{k,k} & {\bf 0}_{k,n-k} \\
           \hline
           {\bf 0}_{n-k,k} & A_{22} \\
         \end{array}
       \right)\in \mathbb{M}_{n}(\mathbb{F})
$$
consisting of $k$ rows and columns of zeros and an invertible non-derogatory matrix $A_{22}$.
Then, for any monic polynomial $q(x)$ of degree $n$ whose trace coincides with the trace of $A$, there exists a square-zero matrix $N$
such that the characteristic polynomial of $A+N$ coincides with $q(x)$.
\end{theorem}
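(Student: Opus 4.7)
The plan is, first, to reduce to the case in which $A_{22}$ is the companion matrix of its characteristic polynomial. Since $A_{22}$ is non-derogatory, there is an invertible $P$ with $P^{-1} A_{22} P = C_p$, where $p(x) = x^{n-k} + u_{n-k-1} x^{n-k-1} + \cdots + u_1 x + u_0$; the invertibility of $A_{22}$ forces $u_0 \neq 0$. Conjugating $A$ by $S = \operatorname{diag}(I_k, P)$ preserves the shape of $A$ (the zero blocks remain zero) and replaces $A_{22}$ by $C_p$, while $N \mapsto S^{-1} N S$ is a bijection on the class of square-zero matrices. So we may assume throughout that $A_{22} = C_p$.

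Writing $q(x) = x^n + u_{n-k-1} x^{n-1} + v_{n-2} x^{n-2} + \cdots + v_1 x + v_0$ (the coefficient of $x^{n-1}$ being forced by the trace condition), the next step is to exhibit a specific square-zero matrix $N$ whose entries are explicit expressions in $v_0,\ldots,v_{n-2}$, the $u_j$'s, and $u_0^{-1}$. Motivated by Remark~\ref{k0} -- which handles $k=0$ by putting all entries of $N$ in the last column so as to turn the companion matrix of $p$ into that of $q$ -- the ansatz for $N$ places most of its nonzero entries in the first $k$ rows and in the last column, with a short chain of entries in the bottom-left block coupling the two pieces. The hypothesis $k < n - k$ gives room to make this pattern sparse enough that $N^2 = 0$ is checkable by direct multiplication (essentially because the nonzero positions in ``outgoing'' and ``incoming'' directions can be kept in disjoint index sets), while still providing the $n-1$ free scalars needed to match $v_0,\ldots,v_{n-2}$.

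With $N$ in hand, the characteristic polynomial of $A + N$ is computed by the row operations that appear in the proof of Lemma~\ref{lemma2}. Working over the field of fractions of $\mathbb{F}[x]$, I would use the $x$'s contributed by the diagonal of $xI - C_p$ to clear the first $k$ rows of $xI - A - N$ (subtracting suitable $1/x$-multiples of intermediate rows), and then expand along the many columns that reduce to a single $x$-entry. The remaining determinant should fit the template of Lemma~\ref{lemma2}, whose evaluation then produces $q(x)$ once the entries of $N$ are matched with the target coefficients $v_i$. The main obstacle, and the place where the hypothesis $k < n-k$ bites, is arranging the nonzero pattern of $N$ so that simultaneously (i) the square-zero condition holds automatically, (ii) the row reduction really does land on the template of Lemma~\ref{lemma2}, and (iii) the resulting system in the entries of $N$ is triangular and can be solved in closed form for every $(v_0,\ldots,v_{n-2})$; here $u_0 \neq 0$ appears as the nonvanishing denominator allowing step (iii) to succeed, and the borderline case $k = n-k$ is expected to force incompatibility, as announced in the introduction.
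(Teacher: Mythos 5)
Your overall strategy matches the paper's: reduce to $A_{22}=C_p$ by conjugating with $\operatorname{diag}(I_k,P)$, build an explicit sparse square-zero $N$ with $n-1$ free parameters, reduce $xI-(A+N)$ by row and column operations to the template of Lemma \ref{lemma2}, and then solve a triangular linear system (the paper phrases this as: the polynomials multiplying the parameters have pairwise distinct lowest-degree terms running over degrees $0,\dots,n-2$, hence form a basis of the polynomials of degree at most $n-2$). However, what you have written is a plan rather than a proof: the step you yourself flag as ``the main obstacle'' --- exhibiting a nonzero pattern for $N$ that is simultaneously square-zero, reducible to the Lemma \ref{lemma2} template, and rich enough to hit every coefficient vector $(v_0,\dots,v_{n-2})$ --- is exactly the content of the theorem, and it is not supplied.

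Moreover, the ansatz you sketch points in the wrong direction on two counts. First, the shape: a matrix supported on the first $k$ rows, the last column, and a short chain in the bottom-left block cannot reach the low-order coefficients of the characteristic polynomial; the paper's $N$ must (and does) carry its parameters $a_{i,s}$ in the \emph{last} $k$ rows as well as the first $k$, precisely so that it perturbs the bottom rows of $C_p$, which govern the constant and low-degree coefficients. Second, the mechanism for $N^2=0$: you propose keeping the ``outgoing'' and ``incoming'' index sets disjoint, but the paper's $N$ has nonzero entries in every column, so that mechanism is not available; instead $N^2=0$ holds because the image of $N$ is contained in $\operatorname{span}\{e_1-e_n,\dots,e_k-e_{n-k+1}\}$ and $N$ annihilates each $e_i-e_{n-i+1}$, which is arranged by the antisymmetric $\pm 1$ pairing between row $i$ and row $n-i+1$. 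Finally, the hypothesis $k<n-k$ does not merely provide ``room for sparsity'': it guarantees that the middle band of columns $k+1,\dots,n-k+1$ is long enough for the lowest degrees of the resulting coefficient polynomials to sweep out all of $0,\dots,n-2$ (the paper's table), and the counterexample at $k=n-k$ shows the obstruction is genuinely structural (Proposition \ref{square-zero}), not a failure of triangularity of a linear system. So the proposal identifies the correct roadmap but leaves the decisive construction, and its verification, undone.
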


\begin{proof} The case $k=0$ was treated in Remark \ref{k0}, so we can suppose that $k>0$.
Under an appropriate change of the existing basis, we can also suppose that $A_{22}$ is a companion matrix $C(p(x))$ for a polynomial of the form $p(x)=x^{n-k}+u_{n-k-1}x^{n-k-1}+\dots+u_0$.

Take arbitrary $a_{ij}\in \FF$, $i=1,\dots, k$, $j=0,\dots, n-2k$, and define the matrix
$N:=(\alpha_{ij})\in \mathbb{M}_{n}(\mathbb{F})$ like this:
$$\alpha_{i,j}:=\begin{cases}
     -a_{i,n-k+1}&\ \text{ if }\ i=1,\dots,k \text{ and } j=k,\\
     -a_{i,j}&\ \text{ if }\  i=1,\dots,k \text{ and }j=k+1,\dots, n-k+1,\\
      a_{n-i+1,n-k+1}&\ \text{ if }\  i=n-k+1,\dots,n \text{ and } j=k,\\
     a_{n-i+1,j}&\ \text{ if }\  i=n-k+1,\dots,n \text{ and } j=k+1,\dots, n-k+1,\\
     -1&\ \text{ if }\  i=1,\dots,k-1 \text{ and } j=i, n-i+1,\\
     1&\ \text{ if }\  i=n-k+1,\dots,n \text{ and }  j=i, n-i+1,\\
     0&\ \text{ otherwise. }
\end{cases} $$
Let us first show that $N^2=0$: in fact, if we denote by $\{e_1,\dots, e_n\}$ the vectors of the canonical basis of $\mathbb{F}^n$, and by $n:\mathbb{F}^n\to \mathbb{F}^n$ the endomorphism whose matrix with respect to the canonical basis is $N$, one can check that
$$
n(e_j)\in {\rm span}\{e_1-e_n, e_2-e_{n-1}, \dots, e_k-e_{n-k+1}\},\qquad j=1,\dots, n
$$
and $n(e_i-e_{n-i+1})=0$, $i=1,\dots, k$, so $n^2=0$, i.e., $N$ is really a square-zero matrix, as claimed.

Let us calculate the characteristic polynomial of the matrix $A+N$, i.e., let us calculate the determinant of the matrix $M=x\Id_n-(A+N)$. We will perform  several elementary transformations on the rows and columns of $M$ in order to obtain a  matrix of the form of Lemma \ref{lemma2}, and then we will use the formula for the determinant of such matrix proved in Lemma \ref{lemma2}. To this purpose, we denote by $R(i)$ and $C(i)$ the respective $i^{\rm th}$-row and the $i^{\rm th}$-column of the involved matrices. For our calculations, we will assume that our matrix is a matrix with coefficients in the field of fractions of the polynomial ring possessing coefficients in $\mathbb{F}$.

Since it is not possible to explicitly write the generic matrix $M$, we will represent the transformations on the example $n=10$ and $k=4$. In this concrete case, we have:
{\small $$
A=\left(
      \begin{array}{cccc|cccccc}
         0 & 0 & 0 & 0 & 0 & 0 & 0 & 0 & 0 &   0\\
          0 & 0 & 0 & 0 & 0 & 0 & 0 & 0 & 0 & 0 \\
          0 & 0 & 0 & 0 & 0 & 0 & 0 & 0 & 0 & 0 \\
         0 & 0 & 0 & 0 & 0 & 0 & 0 & 0 & 0 & 0 \\
        \hline
        0 & 0 & 0 & 0 & 0 & 0 & 0 & 0 & 0 & -u_0 \\
        0 & 0 & 0 & 0 & 1&  0 & 0 & 0 & 0 & -u_1 \\
        0 & 0 & 0 & 0 & 0 & 1 & 0 & 0 & 0 & -u_2 \\
       0 & 0 & 0  &  0 &  0 &  0 & 1 & 0& 0 & -u_3 \\
       0 & 0 & 0 &  0 &  0 &  0 &  0 & 1 &0& -u_4 \\
        0 & 0 & 0 & 0  & 0  &  0 & 0  & 0 & 1 & -u_5 \\
      \end{array}
    \right),
$$
$$
N=\left(
      \begin{array}{cccccccccc}
         -1 & 0 & 0 & -a_{1,7} & -a_{1,5} & -a_{1,6} &-a_{1,7} & 0 & 0 &   -1\\
          0 &-1& 0 & -a_{2,7} & -a_{2,5} & -a_{2,6} & -a_{2,7} & 0 & -1 & 0 \\
          0 & 0 & -1 & -a_{3,7} & -a_{3,5} & -a_{3,6} & -a_{3,7}& -1 & 0 & 0 \\
          0 & 0 & 0 & -a_{4,7} & -a_{4,5} & -a_{4,6} & -a_{4,7}& 0 & 0 & 0 \\
            0 & 0 & 0 & 0 & 0 & 0 & 0 & 0 & 0 & 0 \\
        0 & 0 & 0 & 0 & 0&  0 & 0 & 0 & 0 &0 \\
         0 & 0 & 0 & a_{4,7} & a_{4,5} & a_{4,6} & a_{4,7}& 0 & 0 & 0 \\
    0 & 0 & 1 & a_{3,7} & a_{3,5} & a_{3,6} & a_{3,7}& 1 & 0 & 0 \\
      0 & 1& 0 & a_{2,7} & a_{2,5} & a_{2,6} & a_{2,7} & 0 & 1 & 0 \\
       1 & 0 & 0 & a_{1,7} & a_{1,5} & a_{1,6} & a_{1,7} & 0 & 0 &   1\\
      \end{array}
    \right),\hbox{ and}
$$
}
\scriptsize
$$M=\left(
      \begin{array}{cccccccccc}
        x+1 & 0 & 0 & a_{1,7} & a_{1,5} & a_{1,6} & a_{1,7} & 0 & 0 & 1 \\
        0 & x+1 & 0 & a_{2,7} & a_{2,5} & a_{2,6} & a_{2,7} & 0 & 1 & 0 \\
        0 & 0 & x+1 & a_{3,7} & a_{3,5} & a_{3,6} & a_{3,7} & 1 & 0 & 0 \\
        0 & 0 & 0 & x+a_{4,7} & a_{4,5} & a_{4,6} & a_{4,7} & 0 & 0 & 0 \\
        0 & 0 & 0 & 0 & x & 0 & 0 & 0 & 0 & u_0 \\
        0 & 0 & 0 & 0 & -1 & x & 0 & 0 & 0 & u_1 \\
        0 & 0 & 0 & -a_{4,7} & -a_{4,5} & -a_{4,6} & x-a_{4,7} & 0 & 0 & u_2 \\
       0 & 0 & -1 & -a_{3,7} & -a_{3,5} & -a_{3,6} & -a_{3,7}-1 & x-1 & 0 & u_3 \\
       0 & -1 & 0 & -a_{2,7} & -a_{2,5} & -a_{2,6} & -a_{2,7} & -1 & x-1 & u_4 \\
       -1 & 0 & 0 & -a_{1,7} & -a_{1,5} & -a_{1,6} & -a_{1,7} & 0 & -1 & x-1+u_5 \\
      \end{array}
    \right).$$\normalsize

\noindent (1) For $s=1,2,.... k$, we  add row $R(s)$ to row $R(n-s+1)$:
\scriptsize
$$\left(
      \begin{array}{cccccccccc}
        x+1 & 0 & 0 & a_{1,7} & a_{1,5} & a_{1,6} & a_{1,7} & 0 & 0 & 1 \\
        0 & x+1 & 0 & a_{2,7} & a_{2,5} & a_{2,6} & a_{2,7} & 0 & 1 & 0 \\
        0 & 0 & x+1 & a_{3,7} & a_{3,5} & a_{3,6} & a_{3,7} & 1 & 0 & 0 \\
        0 & 0 & 0 & x+a_{4,7} & a_{4,5} & a_{4,6} & a_{4,7} & 0 & 0 & 0 \\
        0 & 0 & 0 & 0 & x & 0 & 0 & 0 & 0 & u_0 \\
        0 & 0 & 0 & 0 & -1 & x & 0 & 0 & 0 & u_1 \\
        0 & 0 & 0 & x & 0 & -1 & x & 0 & 0 & u_2 \\
       0 & 0 & x & 0 &0  & 0 & -1 & x & 0 & u_3 \\
       0 & x & 0 & 0 & 0 & 0 & 0 & -1 & x & u_4 \\
       x & 0 & 0 & 0 & 0 & 0 & 0 & 0 & -1 & x+u_5 \\
      \end{array}
    \right)$$\normalsize

\noindent (2) For $s=2,.... k$, we add  $-\sum_{i=2}^{s} \frac{1}{x^{i-2}}C(n-s+i-1)$ to column $C(s)$:
\scriptsize
$$\left(
      \begin{array}{cccccccccc}
        x+1 & 0 & 0 & 0 & a_{1,5} & a_{1,6} & a_{1,7} & 0 & 0 & 1 \\
        0 & x & -1/x & -1/x^2 & a_{2,5} & a_{2,6} & a_{2,7} & 0 & 1 & 0 \\
        0 & 0 & x & -1/x & a_{3,5} & a_{3,6} & a_{3,7} & 1 & 0 & 0 \\
        0 & 0 & 0 & x & a_{4,5} & a_{4,6} & a_{4,7} & 0 & 0 & 0 \\
        0 & 0 & 0 & 0 & x & 0 & 0 & 0 & 0 & u_0 \\
        0 & 0 & 0 & 0 & -1 & x & 0 & 0 & 0 & u_1 \\
        0 & 0 & 0 & 0 & 0 & -1 & x & 0 & 0 & u_2 \\
       0 & 0 & 0 & 0 &0  & 0 & -1 & x & 0 & u_3 \\
       0 & 0 & 0 & 0 & 0 & 0 & 0 & -1 & x & u_4 \\
       x & 1 & 1/x & 1/x^2 & 0 & 0 & 0 & 0 & -1 & x+u_5 \\
      \end{array}
    \right)$$\normalsize

\noindent (3) For $s=2,.... k$, we  add  $-\frac{1}{x^{s-1}}C(1)$ to column $C(s)$:
\scriptsize
$$\left(
      \begin{array}{cccccccccc}
        x+1 & -(x+1)/x & -(x+1)/x^2 & -(x+1)/x^3 & a_{1,5} & a_{1,6} & a_{1,7} & 0 & 0 & 1 \\
        0 & x & -1/x & -1/x^2 & a_{2,5} & a_{2,6} & a_{2,7} & 0 & 1 & 0 \\
        0 & 0 & x & -1/x & a_{3,5} & a_{3,6} & a_{3,7} & 1 & 0 & 0 \\
        0 & 0 & 0 & x & a_{4,5} & a_{4,6} & a_{4,7} & 0 & 0 & 0 \\
        0 & 0 & 0 & 0 & x & 0 & 0 & 0 & 0 & u_0 \\
        0 & 0 & 0 & 0 & -1 & x & 0 & 0 & 0 & u_1 \\
        0 & 0 & 0 & 0 & 0 & -1 & x & 0 & 0 & u_2 \\
       0 & 0 & 0 & 0 &0  & 0 & -1 & x & 0 & u_3 \\
       0 & 0 & 0 & 0 & 0 & 0 & 0 & -1 & x & u_4 \\
       x & 0 & 0 & 0 & 0 & 0 & 0 & 0 & -1 & x+u_5 \\
      \end{array}
    \right)$$\normalsize

\noindent(4) For $s=k+1,\dots,n-1$, we  add   $\frac{1}{x}R(s)$ to row $R(s+1)$:
\tiny
$$\left(
      \begin{array}{cccccccccc}
        x+1 & -(x+1)/x & -(x+1)/x^2 & -(x+1)/x^3 & a_{1,5} & a_{1,6} & a_{1,7} & 0 & 0 & 1 \\
        0 & x & -1/x & -1/x^2 & a_{2,5} & a_{2,6} & a_{2,7} & 0 & 1 & 0 \\
        0 & 0 & x & -1/x & a_{3,5} & a_{3,6} & a_{3,7} & 1 & 0 & 0 \\
        0 & 0 & 0 & x & a_{4,5} & a_{4,6} & a_{4,7} & 0 & 0 & 0 \\
        0 & 0 & 0 & 0 & x & 0 & 0 & 0 & 0 & u_0 \\
        0 & 0 & 0 & 0 & 0 & x & 0 & 0 & 0 & \sum_{i=0}^1 u_i/x^{1-i}\\
        0 & 0 & 0 & 0 & 0 & 0 & x & 0 & 0 & \sum_{i=0}^2 u_i/x^{2-i} \\
       0 & 0 & 0 & 0 &0  & 0 & 0 & x & 0 & \sum_{i=0}^3 u_i/x^{3-i} \\
       0 & 0 & 0 & 0 & 0 & 0 & 0 & 0 & x & \sum_{i=0}^4 u_i/x^{4-i} \\
       x & 0 & 0 & 0 & 0 & 0 & 0 & 0 & 0 & x+\sum_{i=0}^5 u_i/x^{5-i} \\
      \end{array}
    \right)$$\normalsize

\medskip

Let us denote by $v_s:=\sum_{i=0}^s \frac{u_i}{x^{s-i}}$, $s=0, \dots, n-k-1$.

\noindent (5) For $s=3,\dots, k$, we add $\sum_{i=2}^{s-1} \frac{C(i)}{x^{s-i+1}}$ to column $C(s)$:
\tiny
$$\left(
      \begin{array}{cccccccccc}
        x+1 & -(x+1)/x & -(x+1)(1/x^2+1/x^3) & -(x+1)(1/x^3+2/x^4+1/x^5) & a_{1,5} & a_{1,6} & a_{1,7} & 0 & 0 & 1 \\
        0 & x & 0 & 0 & a_{2,5} & a_{2,6} & a_{2,7} & 0 & 1 & 0 \\
        0 & 0 & x & 0 & a_{3,5} & a_{3,6} & a_{3,7} & 1 & 0 & 0 \\
        0 & 0 & 0 & x & a_{4,5} & a_{4,6} & a_{4,7} & 0 & 0 & 0 \\
        0 & 0 & 0 & 0 & x & 0 & 0 & 0 & 0 & v_0 \\
        0 & 0 & 0 & 0 & 0 & x & 0 & 0 & 0 & v_1\\
        0 & 0 & 0 & 0 & 0 & 0 & x & 0 & 0 & v_2 \\
       0 & 0 & 0 & 0 &0  & 0 & 0 & x & 0 & v_3 \\
       0 & 0 & 0 & 0 & 0 & 0 & 0 & 0 & x & v_4 \\
       x & 0 & 0 & 0 & 0 & 0 & 0 & 0 & 0 & x+v_5 \\
      \end{array}
    \right)$$\normalsize

\medskip

Let us now prove by induction on $k$ that, for any $t\le k$, in the above matrix the coefficient in position $(1,t)$ is $-z_t$, where $$z_t:=(x+1)\left(\sum_{r=0}^{t-2} {\binom{t-2}{r}\over x^{t-1+r}}\right).$$
Indeed, for $k=1$ there is nothing to prove. Let us suppose that $$z_i:=(x+1)\left(\sum_{r=0}^{i-2} {\binom{i-2}{r}\over x^{i-1+r}}\right)$$ for $i=1,\dots,t-1$, then one computes that

\begin{align*}
  z_t&=(x+1)/x^{t-1}+z_2/x^{t-1}+\cdots+z_{t-2}/x^3+ z_{t-1}/x^2\\&=(x+1)\left(1/x^{t-1}+\sum_{i=0}^{t-3}\sum_{j=0}^i {\binom{i}{j}\over x^{t+j}}\right)
  =(x+1)\left(1/x^{t-1}+\sum_{i=0}^{t-3}\sum_{j=i}^{t-3-i} {\binom{j}{i}\over x^{t+i}}\right)\\&=(x+1)\left(1/x^{t-1}+\sum_{i=0}^{t-3} {\sum_{j=i}^{t-3-i} \binom{j}{i}\over x^{t+i}}\right)
  =(x+1)\left(1/x^{t-1}+\sum_{i=0}^{t-3} { \binom{t-2}{i+1}\over x^{t+i}}\right)\\
  &=(x+1)\left(1/x^{t-1}+\sum_{i=1}^{t-2} { \binom{t-2}{i}\over x^{t-1+i}}\right)=(x+1)\left(\sum_{r=0}^{t-2} {\binom{t-2}{r}\over x^{t-1+r}}\right).
\end{align*}

\noindent (6) For $s=k+1,\dots, n-k+1$, we add $-\sum_{i=2}^{s-1} \frac{a_{i,s}}{x} C(i)$ to column $C(s)$:
 \tiny
$$\left(
      \begin{array}{cccccccccc}
        x+1 & -z_2 & -z_3 & -z_4 & a_{1,5}+\sum_{i=2}^k a_{i,5}z_i/x &a_{1,6}+\sum_{i=2}^k a_{i,6}z_i/x & a_{1,7}+\sum_{i=2}^k a_{i,7}z_i/x & 0 & 0 & 1 \\
        0 & x & 0 & 0 & 0 & 0 & 0 & 0 & 1 & 0\\
        0 & 0 & x & 0 & 0 & 0 & 0 & 1 & 0 & 0 \\
        0 & 0 & 0 & x & 0 & 0 & 0 & 0 & 0 & 0\\
        0 & 0 & 0 & 0 & x & 0 & 0 & 0 & 0 & v_0 \\
        0 & 0 & 0 & 0 & 0 & x & 0 & 0 & 0 & v_1\\
        0 & 0 & 0 & 0 & 0 & 0 & x & 0 & 0 & v_2 \\
       0 & 0 & 0 & 0 &0  & 0 & 0 & x & 0 & v_3 \\
       0 & 0 & 0 & 0 & 0 & 0 & 0 & 0 & x & v_4 \\
       x & 0 & 0 & 0 & 0 & 0 & 0 & 0 & 0 & x+v_5 \\
      \end{array}
    \right)$$\normalsize

\medskip
Let us denote by $$w_s:=a_{1,s}+\sum_{i=2}^k a_{i,s}z_i/x,$$ $s=k+1, \dots, n-k+1$.

\noindent (7) For $s=2,\dots,k-1$, we add $-\frac{1}{x}C(s)$ to column $C(n-s+1)$:
 \tiny
$$\left(
      \begin{array}{cccccccccc}
        x+1 & -z_2 & -z_3 & -z_4 & w_5 & w_6 & w_7 & z_3/x & z_2/x & 1 \\
        0 & x & 0 & 0 & 0 & 0 & 0 & 0 & 0 & 0\\
        0 & 0 & x & 0 & 0 & 0 & 0 & 0 & 0 & 0 \\
        0 & 0 & 0 & x & 0 & 0 & 0 & 0 & 0 & 0\\
        0 & 0 & 0 & 0 & x & 0 & 0 & 0 & 0 & v_0 \\
        0 & 0 & 0 & 0 & 0 & x & 0 & 0 & 0 & v_1\\
        0 & 0 & 0 & 0 & 0 & 0 & x & 0 & 0 & v_2 \\
       0 & 0 & 0 & 0 &0  & 0 & 0 & x & 0 & v_3 \\
       0 & 0 & 0 & 0 & 0 & 0 & 0 & 0 & x & v_4 \\
       x & 0 & 0 & 0 & 0 & 0 & 0 & 0 & 0 & x+v_5 \\
      \end{array}
    \right)$$\normalsize

\medskip

Thereby, we have obtained a matrix equivalent to $M$ which is of the form required in Lemma \ref{lemma2}. Therefore,  we get:

\begin{align*}
\vert M\vert&= x^n+v_{n-k-1} x^{n-1}+v_{n-k-1}x^{n-2}+\sum_{s=k+1}^{n-k+1} w_{s} v_{s-k-1}x^{n-2}\\&+\sum_{i=1}^{k-2} {z_{k-i}\over x} v_{n-2k+i}x^{n-2}.
\end{align*}
Next, substituting the $v's$, $w's$ and $z's$ in the above expression, we conclude that
\scriptsize
    \begin{align*}
      &\vert M\vert= x^n+ \sum_{i=0}^{n-k-1} u_i  x^{k+i}+ \sum_{i=0}^{n-k-1} u_i  x^{k+i-1}\\&
      +\sum_{s=k+1}^{n-k+1} \left(a_{1,s}+\sum_{i=2}^k \sum_{r=0}^{i-2} \binom{i-2}{r} a_{i,s}{1\over x^{i+r-2}}+\sum_{i=2}^k \sum_{r=0}^{i-2} \binom{i-2}{r} a_{i,s}{1\over x^{i+r-1}}\right)
      \left(\sum_{j=0}^{s-k-1} u_j/x^{s-k-1-j}\right)x^{n-2}\\&
      +\sum_{i=1}^{k-2} {(1+x)\over x}\left(\sum_{r=0}^{k-i-2} {\binom{k-i-2}{r}\over x^{k-i-1+r}}\right) \left(\sum_{j=0}^{n-2k+i} u_j/x^{n-2k+i-j}\right)x^{n-2}\\&
      =x^n+ \sum_{i=0}^{n-k-1} u_i  x^{k+i}+ \sum_{i=0}^{n-k-1} u_i  x^{k+i-1}\hskip 7.6truecm (1)\\&
       +\sum_{s=k+1}^{n-k+1}\sum_{j=0}^{s-k+1} a_{1,s}u_j x^{n+k+j-s-1}\hskip 8.4truecm (2)\\&
       +\sum_{s=k+1}^{n-k+1} \sum_{i=2}^k \sum_{r=0}^{i-2} \sum_{j=0}^{s-k+1} \binom{i-2}{r} a_{i,s} u_j x^{n+k+j-i-r-s-1}\hskip 5.95truecm (3)\\&
      +\sum_{s=k+1}^{n-k+1} \sum_{i=2}^k \sum_{r=0}^{i-2} \sum_{j=0}^{s-k+1} \binom{i-2}{r} a_{i,s} u_j x^{n+k+j-i-r-s}\hskip 6.25truecm (4)\\&
      +\sum_{i=1}^{k-2}\sum_{r=0}^{k-i-2}\sum_{j=0}^{n-2k+i} \binom{k-i-2}{r} u_j x^{k+j-r-2}\hskip 7.2truecm (5)\\&
      +\sum_{i=1}^{k-2}\sum_{r=0}^{k-i-2}\sum_{j=0}^{n-2k+i} \binom{k-i-2}{r} u_j x^{k+j-r-1}.\hskip 7.2truecm (6)
    \end{align*}
\normalsize

\medskip

From now on, we are going to focus on the elements $a_{i,s}$ and we are going to determine the monomial of smallest degree, among the monomials appearing in the determinant of $M$, where each $a_{i,s}$ appears. Recall that $u_0\ne 0$.

\medskip

\noindent$\bullet$ \underbar{$i=1$}. The element $a_{1,s}$ appears only in the summands labelled by (2)
$$\sum_{s=k+1}^{n-k+1}\sum_{j=0}^{s-k+1} a_{1,s}u_j x^{n+k+j-s-1}.\eqno{(2)}$$
To get the  monomial of smallest degree, the index $j$ has to be equal to $0$:
$$\sum_{s=k+1}^{n-k+1} a_{1,s}u_0 x^{n+k-s-1},$$
hence each $a_{1,s}$ appears in the monomial $a_{1,s}u_0 x^{n+k-s-1}$, $s=k+1,\dots, n-k+1$.

\medskip

Let us we represent the degrees of such monomials in the following table:
\begin{center}
\begin{tabular}{| c | c | c| c| c |}
\hline
\rowcolor[gray]{ 0.9}  degree  & $s=k+1$ & \dots& \dots & $s=n-k+1$ \\
\hline
 \cellcolor{lightgray}$i=1$ &  $n-2$ & \dots& \dots & $2k-2$ \\
\hline
\end{tabular}
\end{center}

\medskip

\noindent$\bullet$ \underbar{$i>1$}. Set $i>1$ and $s\in\{k+1,\dots, n-k+1\}$ and let us determine the monomial of smallest degree, where $a_{i,s}$ appears: it only appears in summands labelled by (3) and (4) in the expansion of the determinant of $M$ and, consequently, it is in the summand labelled by $(3)$ where it appears with the smallest degree (notice that the summand labelled by $(4)$ coincides with the summand $(3)$ multiplied by $x$):
$$
\sum_{s=k+1}^{n-k+1} \sum_{i=2}^k \sum_{r=0}^{i-2} \sum_{j=0}^{s-k+1} \binom{i-2}{r} a_{i,s} u_j x^{n+k+j-i-r-s-1}.\eqno{(3)}
$$
There, the index $j$ has to be as small as possible (whence $j=0$) and $r$ has to be as large as possible (so $r=i-2$):
$$\sum_{s=k+1}^{n-k+1} \sum_{i=2}^k  \binom{i-2}{i-2} a_{i,s} u_0 x^{n+k-2i-s+1},$$
hence each $a_{i,s}$ appears in the monomial $ u_0 x^{n+k-2i-s+1}$. Representing the degrees of such monomials in a table we get
\begin{center}
\begin{tabular}{| c | c | c| c| c |}
\hline
\rowcolor[gray]{ 0.9} degree  & $s=k+1$ & \dots& \dots & $s=n-k+1$ \\
\hline
 \cellcolor{lightgray}$i>1$ &  $n-2i$ &\dots & \dots & $2k-2i$ \\
\hline
\end{tabular}
\end{center}

Let us now collect all the information about the degrees in a single table:
{\scriptsize
\begin{center}
\begin{tabular}{| c | c | c| c|c| c| c |}
\hline
\rowcolor[gray]{ 0.9} degree  & $s=k+1$ & $s=k+2$&  \dots& $s=n-k-1$ & $s=n-k$& $s=n-k+1$ \\
\hline
 \cellcolor{lightgray}$i=1$ &  $n-2$ & $n-3$&\dots & $2k$ & $2k-1$ & $2k-2$ \\
\hline
\cellcolor{lightgray}$i=2$ &  $n-4$ & $n-5$&\dots  & $2k-2$ & $2k-3$ & $2k-4$ \\
\hline
\cellcolor{lightgray}$i=3$ &  $n-6$ & $n-7$ &\dots & $2k-4$ & $2k-5$ & $2k-6$ \\
\hline
\cellcolor{lightgray}$\vdots$ &  $\vdots$ &  $\vdots$ &  &$\vdots$ & $\vdots$& $\vdots$ \\
\hline
\cellcolor{lightgray}$i=k-2$ &  $n-2k+4$ & $n-2k+3$ & \dots&  6& 5 & 4\\
\hline
\cellcolor{lightgray}$i=k-1$ &  $n-2k+2$ & $n-2k+1$& \dots & 4 & 3 & 2 \\
\hline
\cellcolor{lightgray}$i=k$ &  $n-2k$ &$n-2k-1$ &\dots & 2 & 1 & 0 \\
\hline
\end{tabular}
\end{center}
}

If we concentrate on the last row and the first two columns of this table, we obtain all the possible degrees from 0 to $n-2$.

For $i=1,\dots, k-1$ and $s=k+1,k+2$, as well as for $i=k$ and $s=k+1,\dots, n-k+1$, let us define the polynomial $f_{i,s}$ consisting of the sum of all the monomials of the determinant of $M$ which contain the variable $a_{i,s}$. Thus, $\{f_{i,s}\}$ is a family of $n-2$ polynomials of smallest degree from 0 to $n-2$, which implies that the set $\{f_{i,s}\}$ forms a basis of the vector space of polynomials of degree less than or equal to $n-2$ over $\mathbb{F}$.

Therefore, for any monic polynomial $q(x)$ of degree $n$ whose trace coincides with the trace of $p(x)$, we can choose appropriate elements $a_{i,s}$ and use such basis to get $q(x)$ as characteristic polynomial of $A+N$,  which completes the proof.
\end{proof}

We will show now that the restriction $k<n-k$ in the previous theorem is unremovable. Before giving a counter-example when $n=2k$, let us study how square-zero matrices $N\in \mathbb{M}_{2k}(\mathbb{F})$ must be in order to get an invertible matrix $A+N$
for
$$
A=\left(
    \begin{array}{c|c}
      {\bf 0}_{k,k} & {\bf 0}_{k,k} \\
      \hline
     {\bf 0}_{k,k} & C \\
    \end{array}
  \right)\in \mathbb{M}_{2k}(\mathbb{F}),
$$
where $C$ is invertible and non-derogatory. %

The following assertion is pivotal for our purpose mentioned above.

\begin{proposition}\label{square-zero}
For any matrix of the form
$$
A=\left(
    \begin{array}{c|c}
      {\bf 0}_{k,k} & {\bf 0}_{k,k} \\
      \hline
     {\bf 0}_{k,k} & C(p(x)) \\
    \end{array}
  \right)\in \mathbb{M}_{2k}(\mathbb{F}),
$$
where $C(p(x))$ is the companion matrix of a degree $k$ polynomial whose zero-coefficient is non-zero and any square-zero matrix $N\in \mathbb{M}_{2k}(\mathbb{F})$ such that $A+N$ is invertible, there exists a basis of $\mathbb{F}^n$ with respect to which the matrix $A$ remains the same and $N$ has the form
$$
N=\left(
    \begin{array}{c|c}
      X & -X^2 \\
      \hline
     {\rm Id}_k &  -X \\
    \end{array}
  \right)
$$
for some $X\in \mathbb{M}_k(\mathbb{F})$.
\end{proposition}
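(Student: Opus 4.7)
The plan is to exhibit an explicit $P$ that centralises $A$---so that $A$ remains literally unchanged after the basis change---and that conjugates $N$ to the desired shape. Since $A=\mathrm{diag}(\mathbf{0}_{k,k},C(p(x)))$ with $C(p(x))$ invertible, every $P$ commuting with $A$ must be block-diagonal of the form $P=\mathrm{diag}(P_{1},P_{2})$ with $P_{2}$ in the centraliser of $C(p(x))$; for our construction it will suffice to take $P_{2}=\Id_{k}$. Writing $N=\left(\begin{smallmatrix}N_{11}&N_{12}\\ N_{21}&N_{22}\end{smallmatrix}\right)$ and expanding $N^{2}=0$ produces the four block relations
\begin{align*}
N_{11}^{2}+N_{12}N_{21}&=0, & N_{21}N_{11}+N_{22}N_{21}&=0,\\
N_{11}N_{12}+N_{12}N_{22}&=0, & N_{21}N_{12}+N_{22}^{2}&=0.
\end{align*}
Comparing with the target shape suggests the choice $P_{1}:=N_{21}$, because then the $(2,1)$-block of $PNP^{-1}$ becomes $\Id_{k}\cdot N_{21}\cdot N_{21}^{-1}=\Id_{k}$.

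The main obstacle is thus to show that $N_{21}$ is invertible. I would argue by contradiction: suppose $0\neq v\in\Ker N_{21}$. The first block relation applied to $v$ gives $N_{11}^{2}v=-N_{12}N_{21}v=0$, while the third gives $N_{21}(N_{11}v)=-N_{22}N_{21}v=0$, so $\Ker N_{21}$ is $N_{11}$-invariant and $N_{11}$ squares to zero on it. If $N_{11}v=0$, then $\left(\begin{smallmatrix}v\\ 0\end{smallmatrix}\right)$ is a nonzero element of $\Ker(A+N)$; otherwise $w:=N_{11}v\neq 0$ still lies in $\Ker N_{21}$ and satisfies $N_{11}w=N_{11}^{2}v=0$, so $\left(\begin{smallmatrix}w\\ 0\end{smallmatrix}\right)$ is a nonzero element of $\Ker(A+N)$. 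Either way, $A+N$ would be singular---contradicting the hypothesis.

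Once $N_{21}$ is invertible, the remainder is a routine block computation. Taking $P:=\mathrm{diag}(N_{21},\Id_{k})$ (which commutes with $A$) and $X:=N_{21}N_{11}N_{21}^{-1}$, one checks that $PNP^{-1}$ has $(1,1)$-, $(2,1)$-, $(1,2)$- and $(2,2)$-blocks equal to $X$, $\Id_{k}$, $N_{21}N_{12}$ and $N_{22}$ respectively. The identities $N_{12}N_{21}=-N_{11}^{2}$ and $N_{22}N_{21}=-N_{21}N_{11}$ then rewrite the latter two as $-X^{2}$ and $-X$, producing exactly the announced form.
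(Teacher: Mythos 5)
Your proof is correct, and its skeleton coincides with the paper's: show that the lower-left block $N_{21}$ is invertible, then conjugate by a block-diagonal matrix $\mathrm{diag}(P_1,\Id_k)$ (which automatically fixes $A$) to normalize $N_{21}$ to $\Id_k$, and finally read off the remaining blocks from $N^2=0$. Where you genuinely diverge is in the key step, the invertibility of $N_{21}$. The paper gets it by computing $(A+N)^2=A^2+AN+NA$, observing that its first $k$ columns are $\left(\begin{smallmatrix}{\bf 0}\\ C(p(x))N_{21}\end{smallmatrix}\right)$, and unravelling the explicit entries of $C(p(x))N_{21}$ using $u_0\neq 0$. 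Your argument is more elementary and more general: from the block identities $N_{11}^2=-N_{12}N_{21}$ and $N_{21}N_{11}=-N_{22}N_{21}$ you deduce that $\Ker N_{21}$ is $N_{11}$-invariant with $N_{11}^2$ vanishing on it, so any nonzero $v\in\Ker N_{21}$ produces a nonzero vector $\left(\begin{smallmatrix}v\\0\end{smallmatrix}\right)$ or $\left(\begin{smallmatrix}N_{11}v\\0\end{smallmatrix}\right)$ in $\Ker(A+N)$ --- this uses only that the first $k$ columns of $A$ vanish, not the companion structure or $u_0\neq 0$. Your closing computation is also slightly tidier than the paper's: rather than changing basis first and then solving $N^2=0$ for the unknown blocks $Y,Z$, you exhibit $X=N_{21}N_{11}N_{21}^{-1}$ explicitly and verify all four blocks of $PNP^{-1}$ at once from the same two identities. (Minor nitpick: the relation you invoke as ``the third'' is the one reading $N_{21}N_{11}+N_{22}N_{21}=0$; the labelling in your displayed array is ambiguous, but the identity you actually use is the right one.)
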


\begin{proof}
Suppose that
$$
C(p(x))=\left(
    \begin{array}{ccc c}
      0 & \dots &  0& -u_0 \\
      1 &  & 0 & \vdots \\
       & \ddots &  & -u_{n-2} \\
      0 &  & 1 & -u_{n-1} \\
    \end{array}
  \right)\in \mathbb{M}_k(\mathbb{F})
$$
with $u_0\ne 0$ and
$$
A=\left(
    \begin{array}{c|c}
      {\bf 0}_{k,k} & {\bf 0}_{k,k} \\
      \hline
     {\bf 0}_{k,k} & C(p(x)) \\
    \end{array}
  \right)\in \mathbb{M}_{2k}(\mathbb{F}).
$$
Let $N=(n_{i,j})$ be any square-zero matrix such that $A+N$ is invertible. Then, the matrix $(A+N)^2=A^2+AN+NA$ is also invertible and has the form
$$
(A+N)^2=\left(
    \begin{array}{c|c}
      {\bf 0}_{k,k} & * \\
      \hline
     M & * \\
    \end{array}
  \right),
$$
where $M$ is given by:
{\small
$$
M=\left(
    \begin{array}{ccccc}
      -u_0n_{2k,1} & -u_0n_{2k,2} & \dots & \dots & -u_0n_{2k,k} \\
      n_{k+1,1}-u_1n_{2k,1} & n_{k+1,2}-u_1n_{2k,2} & \dots & \dots &  n_{k+1,k}-u_1n_{2k,k} \\
      n_{k+2,1}-u_2n_{2k,1} & n_{k+2,2}-u_2n_{2k,2} & \dots & \dots& n_{k+2,k}-u_2n_{2k,k}) \\
      \vdots &  \vdots &  &  &  \vdots\\
      n_{2k-1,1}-u_{k-1}n_{2k,1} & n_{2k-1,2}-u_{k-1}n_{2k,2} & \dots & \dots & n_{2k-1,2}-u_{k-1}n_{2k,2} \\
    \end{array}
  \right).
$$
}
Since $u_0\ne 0$, from the form of $M$ we obtain that the block matrix
$$
N_{21}=\left(
         \begin{array}{cccc}
           n_{k+1,1} &  n_{k+1,2} & \dots &  n_{k+1,k} \\
           n_{k+2,1} & n_{k+2,2} & \dots &  n_{k+2,k} \\
           \vdots &   &   & \vdots \\
           n_{2k,1} & n_{2k,2} &  & n_{2k,k} \\
         \end{array}
       \right)\hbox{ in } N=\left(
    \begin{array}{c|c}
      {N}_{11} & N_{12} \\
      \hline
     N_{21} & N_{22} \\
    \end{array}
  \right)
$$
is invertible too.

Let $\{e_1,\dots, e_n\}$ be the canonical basis of $\mathbb{F}^n$, and consider the subspace $S$ of $\mathbb{F}^n$ generated by $e_1,\dots, e_k$; since the block $N_{21}$ is invertible, there exists linear combinations of the first $k$-columns of $N$ that give rise to $e_{k+1}, e_{k+2},\dots, e_{2k}$ modulo $S$, i.e., there exists scalars $\alpha_{ij}\in \mathbb{F}$ such that $$(\alpha_{1i}N^{(1)}+\dots+\alpha_{ki}N^{(k)})+S=e_{k+i}+S,\quad i=1,\dots,k,$$ where $N^{(1)},\dots, N^{(k)}$ denote the first $k$-columns of $N$.
Define the basis $$\{e_1',\dots, e_k', e_{k+1}, \dots, e_{2k}\},$$ where
$$e'_{i}=\alpha_{1i}e_1+\dots+\alpha_{ki}e_k, \quad i=1,\dots, k.
$$
With respect to this basis, the matrix $N$ is of the form
 $$
N=\left(
    \begin{array}{c|c}
     * & * \\
      \hline
     {\rm Id}_k  &*\\
    \end{array}
  \right)
$$ and the matrix $A$ remains as
$$
A=\left(
    \begin{array}{c|c}
      {\bf 0}_{k,k} & {\bf 0}_{k,k} \\
      \hline
     {\bf 0}_{k,k} & C(p(x)) \\
    \end{array}
  \right)\in \mathbb{M}_{2k}(\mathbb{F}).
$$
Suppose that
$$
N=\left(
    \begin{array}{c|c}
     X & Y \\
      \hline
     {\rm Id}_k  &Z\\
    \end{array}
  \right).
$$
From the condition $N^2=0$, we easily infer that $Y=-X^2$ and $Z=-X$, i.e.,
$$
N=\left(
    \begin{array}{c|c}
     X & -X^2 \\
      \hline
     {\rm Id}_k  &-X\\
    \end{array}
  \right)
$$
for some  $X\in \mathbb{M}_k(\mathbb{F})$.
\end{proof}

We are now ready to give the promised counter-example.
\begin{remark}
Let  $\mathbb{F}=\mathbb{R}$, the field of real numbers. Consider the matrix
  $$
A=\left(
    \begin{array}{cc|cc}
      0 &0 & 0 & 0 \\
      0 & 0 & 0 & 0 \\
      \hline
     0 & 0 & 0 &-1 \\
      0 & 0 & 1 & 0 \\
    \end{array}
  \right)\in\mathbb{M}_4(\mathbb{F}).
  $$
We claim that there does not exist any square-zero matrix $N$ such that the characteristic polynomial of $A+N$ is $x^4+1$.
Otherwise, according to Proposition \ref{square-zero}, we can suppose that $N$ is of the form
$$
N=\left(
    \begin{array}{c|c}
     X & -X^2 \\
      \hline
     {\rm Id}_2  &-X\\
    \end{array}
  \right)\hbox{ for some } X=\left(
                    \begin{array}{cc}
                      n_{11} & n_{12} \\
                      n_{21} & n_{22} \\
                    \end{array}
                  \right).
$$
It is readily checked that the characteristic polynomial of $A+N$ is
$$
p_{A+N}(x)=x^4+(n_{12}-n_{21}+1)x^2-(n_{11}+n_{22})x+n_{11} n_{22}-n_{12}n_{21}
$$
and, if this polynomial equals $x^4+1$, then
$$
\left\{
  \begin{array}{ll}
     n_{12}-n_{21}+1=0\\
    n_{11}+n_{22}=0 \\
    n_{11} n_{22}-n_{12}n_{21}=1
  \end{array}
\right.
$$
so that $n_{22}=-n_{11}$ and $n_{21}=n_{12}+1$ would imply $1=-n_{11}^2-n_{12}(n_{12}+1)=-n_{11}^2-n_{12}^2-n_{12}$; therefore, $$n_{11}^2=-n_{12}^2-n_{12}-1,$$ but, for any real number, the left hand side of this equality is positive, while the right hand side of this equality is less than or equal to $-3/4$, which is an absurd.
\end{remark}

Besides, one also observes that:

\begin{remark} In Remark \ref{k0}, we showed how to modify an invertible non-derogatory matrix $A$ by adding a square-zero matrix $N$ to obtain a matrix $A+N$ with any prescribed characteristic polynomial with the same trace as $A$. In addition, by construction, the resulting matrix $A+N$ was again non-derogatory. With that idea in mind, we close this section by posing the following  problem:

\medskip

\noindent{\bf Question.} Given a matrix of the form $$
A=\left(
         \begin{array}{c|c}
           {\bf 0}_{k,k} & {\bf 0}_{k,n-k} \\
           \hline
           {\bf 0}_{n-k,k} & A_{22} \\
         \end{array}
       \right)\in \mathbb{M}_{n}(\mathbb{F})
$$
consisting of $k$ rows and columns of zeros and an invertible non-derogatory matrix $A_{22}$, and a prescribed polynomial $q(x)$ of degree $n$ whose trace coincides with the trace of $A$, is it possible to find a square-zero matrix such that $A+N$ is {\it non-derogatory} and the characteristic polynomial of $A+N$ coincides with $q(x)$?
\end{remark}

\section{Some consequences}

In the present section, some results related to our project of decomposing a square matrix $A$ into the sum of a square-zero matrix $N$ and another matrix satisfying some fundamental properties such as been diagonalizable, invertible, torsion, etc.  (see, for example, \cite{DGL1}, \cite{DGL2}, \cite{DGL3} and \cite{DGL4}) can be obtained by forcing that $A-N$ verifies a certain prescribed characteristic polynomial. Specifically, the key point in \cite{DGL3} when showing that any nilpotent matrix of rank at least $\frac{n}{2}$ can be expressed as the sum of a torsion matrix and a square-zero matrix was the fact that the original nilpotent matrix could be written as the direct sum of blocks, each of them expressed as the sum of a square-zero matrix and a matrix satisfying a certain characteristic polynomial that forced this last matrix to be torsion.

\medskip

So, with the aid of Theorem \ref{teoremappal}, we can elementarily prove the following three corollaries.

\begin{corollary}
Let $\mathbb{F}$ be a field, let $n,k\in \mathbb{N}$ with $k<n-k$, and consider the block matrix
$$
A=\left(
         \begin{array}{c|c}
           {\bf 0}_{k,k} & {\bf 0}_{k,n-k} \\
           \hline
           {\bf 0}_{n-k,k} & A_{22} \\
         \end{array}
       \right)\in \mathbb{M}_{n}(\mathbb{F})
$$
consisting of $k$ rows and columns of zeros and an invertible non-derogatory matrix $A_{22}$. Then, if the field $\mathbb{F}$ has enough elements, there exists a square-zero matrix $N$ such that $A+N$ is diagonalizable (with $n$ different eigenvalues).
\end{corollary}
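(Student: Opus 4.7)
The plan is to reduce the problem to Theorem \ref{teoremappal} by exhibiting an appropriate target characteristic polynomial. A matrix in $\mathbb{M}_n(\mathbb{F})$ whose characteristic polynomial splits into $n$ distinct linear factors over $\mathbb{F}$ is automatically diagonalizable (with $n$ different eigenvalues), so it suffices to produce a monic polynomial $q(x)\in \mathbb{F}[x]$ of degree $n$ with $n$ pairwise distinct roots in $\mathbb{F}$ and whose trace equals the trace of $A$.

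First I would set $t:=\operatorname{tr}(A)=\operatorname{tr}(A_{22})\in \mathbb{F}$. The goal is then to choose $\lambda_1,\dots,\lambda_n\in\mathbb{F}$, pairwise distinct, with $\lambda_1+\cdots+\lambda_n=t$; the polynomial $q(x):=\prod_{i=1}^{n}(x-\lambda_i)$ will be monic of degree $n$ and have trace $t$ by Vieta. Pick $\lambda_1,\dots,\lambda_{n-1}\in\mathbb{F}$ pairwise distinct (possible as soon as $|\mathbb{F}|\ge n-1$) and then set $\lambda_n:=t-\sum_{i=1}^{n-1}\lambda_i$. We need $\lambda_n\notin\{\lambda_1,\dots,\lambda_{n-1}\}$; the failure of this requirement imposes at most $n-1$ linear constraints on the $(n-1)$-tuple $(\lambda_1,\dots,\lambda_{n-1})$, so if $\mathbb{F}$ has sufficiently many elements (a bound like $|\mathbb{F}|\ge n+1$ clearly suffices, which is what ``enough elements'' refers to) we can arrange pairwise distinctness.

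Once such a $q(x)$ is fixed, Theorem \ref{teoremappal} applies directly: because $k<n-k$, $A_{22}$ is invertible and non-derogatory, and $\operatorname{tr}(q)=\operatorname{tr}(A)$, there exists a square-zero matrix $N\in\mathbb{M}_n(\mathbb{F})$ such that the characteristic polynomial of $A+N$ is exactly $q(x)$. Since $q(x)$ has $n$ distinct roots in $\mathbb{F}$, the matrix $A+N$ has $n$ distinct eigenvalues in $\mathbb{F}$ and is therefore diagonalizable over $\mathbb{F}$, which completes the proof.

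There is no real obstacle here: the only mildly delicate point is making the size hypothesis on $\mathbb{F}$ precise so that the avoidance argument producing the distinct roots works. Everything else is a direct invocation of the main theorem.
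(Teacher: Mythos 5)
Your proposal is correct and follows essentially the same route as the paper: choose $n$ pairwise distinct scalars whose sum equals $\operatorname{tr}(A)$, take $q(x)$ to be the product of the corresponding linear factors, and invoke Theorem \ref{teoremappal} to realize $q(x)$ as the characteristic polynomial of $A+N$ for a square-zero $N$. Your extra remarks making the ``enough elements'' hypothesis quantitative (and your correct sign convention $\sum\lambda_i=\operatorname{tr}(A)$) are fine refinements of the same argument.
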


\begin{proof}
If $\mathbb{F}$ has enough elements, we can take different elements $\alpha_1,\dots, \alpha_n\in \mathbb{F}$  such that $-(\alpha_1+\dots +\alpha_n)$ equals the trace of $A$. Then fix the polynomial $q(x)=(x-\alpha_1)\cdots(x-\alpha_n)$ and use Theorem \ref{teoremappal} to find a square-zero matrix $N$ such that the characteristic polynomial of $A+N$ is $q(x)$. Since $A+N$ has $n$ different eigenvalues, $A+N$ is diagonalizable, as asserted.
\end{proof}

\begin{corollary}
Let $\mathbb{F}$ be a field, let $n,k\in \mathbb{N}$ with $k<n-k$, and consider the block matrix
$$
A=\left(
         \begin{array}{c|c}
           {\bf 0}_{k,k} & {\bf 0}_{k,n-k} \\
           \hline
           {\bf 0}_{n-k,k} & A_{22} \\
         \end{array}
       \right)\in \mathbb{M}_{n}(\mathbb{F})
$$
consisting of $k$ rows and columns of zeros and an invertible matrix $A_{22}$. Then, there exists an invertible matrix $T$ and a square-zero matrix $N$ such that $A=T+N$.
\end{corollary}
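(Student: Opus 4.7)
The plan is to bypass Theorem~\ref{teoremappal} entirely and exhibit a square-zero $N$ explicitly, then take $T:=A-N$. Unlike the previous corollary, here $A_{22}$ is only assumed invertible (not non-derogatory), so Theorem~\ref{teoremappal} does not directly apply; in any case, prescribing a full characteristic polynomial would be overkill for merely producing an invertible summand.

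Since $k<n-k$, I would split every $n\times n$ matrix into a $3\times 3$ block form with row and column sizes $(k,\,k,\,n-2k)$, and propose
\[
N=\begin{pmatrix} I_k & -I_k & 0 \\ I_k & -I_k & 0 \\ 0 & 0 & 0 \end{pmatrix}.
\]
The square-zero property is immediate from the rank-$k$ factorization $N=UV^{\top}$ with $U=\bigl(\begin{smallmatrix} I_k\\ I_k\\ 0\end{smallmatrix}\bigr)$ and $V^{\top}=(I_k,\,-I_k,\,0)$: since $V^{\top}U=I_k-I_k=0$, we obtain $N^{2}=U(V^{\top}U)V^{\top}=0$.

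For invertibility of $A-N$, I would split $A_{22}=\bigl(\begin{smallmatrix} A^{(1,1)}_{22} & A^{(1,2)}_{22}\\ A^{(2,1)}_{22} & A^{(2,2)}_{22}\end{smallmatrix}\bigr)$ conformably with $A^{(1,1)}_{22}$ of size $k\times k$, so that the $(2,2)$-block of $A-N$ becomes $A_{22}$ plus the corner perturbation $\bigl(\begin{smallmatrix} I_k & 0\\ 0 & 0\end{smallmatrix}\bigr)$. Applying the Schur complement formula to the invertible $(1,1)$-block $-I_k$, a short computation yields $\bigl(\begin{smallmatrix} -I_k\\ 0\end{smallmatrix}\bigr)(-I_k)^{-1}(I_k\ 0)=\bigl(\begin{smallmatrix} I_k & 0\\ 0 & 0\end{smallmatrix}\bigr)$, which exactly cancels that perturbation; the Schur complement thus reduces to $A_{22}$ itself, giving $\det(A-N)=(-1)^{k}\det(A_{22})\neq 0$. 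Consequently $T:=A-N$ is invertible, and $A=T+N$ is the desired decomposition.

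The main conceptual step is to guess the shape of $N$. A naive off-diagonal ansatz $N=\bigl(\begin{smallmatrix} 0 & Q\\ R & 0\end{smallmatrix}\bigr)$ can be ruled out immediately: $N^{2}=0$ forces $QR=RQ=0$, but the requirement that $QA_{22}^{-1}R$ be an invertible $k\times k$ block is incompatible with this (if $R$ has full column rank $k$, then $RQ=0$ already forces $Q=0$). One is therefore pushed toward nonzero diagonal blocks in $N$, and the symmetric choice above is precisely what makes the Schur-complement cancellation succeed. Once the shape of $N$ is written down, all remaining verifications are mechanical, and in particular Theorem~\ref{teoremappal} is never invoked---which explains why the hypothesis on $A_{22}$ can be weakened from ``non-derogatory'' to merely ``invertible.''
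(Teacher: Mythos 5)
Your proof is correct, but it takes a genuinely different route from the paper. The paper reduces to the case where $A_{22}$ is a direct sum of invertible non-derogatory companion matrices of its elementary divisors, distributes the $k$ zero diagonal entries among those blocks, and invokes Theorem~\ref{teoremappal} on each combined block to prescribe a characteristic polynomial with nonzero constant term; the invertible summand is then $A+N'$ for the resulting square-zero $N'$. You instead bypass the main theorem entirely and write down one explicit rank-$k$ square-zero matrix
$$
N=\left(\begin{array}{c|c|c} I_k & -I_k & 0 \\ \hline I_k & -I_k & 0 \\ \hline 0 & 0 & 0 \end{array}\right),
$$
verifying $N^2=0$ via the factorization $N=UV^{\top}$ with $V^{\top}U=0$, and checking $\det(A-N)=(-1)^k\det(A_{22})\neq 0$ by a Schur-complement computation that I have verified: the correction term $RP^{-1}Q$ exactly cancels the $+I_k$ perturbation in the $(2,2)$-corner, leaving $A_{22}$ itself. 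What your approach buys is substantial: it is completely elementary and self-contained; it makes the weakening of ``non-derogatory'' to ``invertible'' transparent rather than requiring the elementary-divisor reduction; it avoids the bookkeeping of splitting $k=k_1+\dots+k_m$ with each $k_i$ strictly smaller than the size of the corresponding companion block (a point the paper's proof glosses over and which is delicate when $A_{22}$ has many small elementary divisors); and it in fact works whenever $k\le n-k$, so it even covers the boundary case $n=2k$ excluded from the hypotheses. What the paper's approach buys is uniformity with the rest of Section 3, where the same template (prescribe a suitable characteristic polynomial, subtract the square-zero matrix) yields the diagonalizable, potent and torsion decompositions, for which an explicit $N$ is not so easily guessed.
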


\begin{proof}
Without loss of generality, assume that $A_{22}$ is the direct sum of the invertible and non-derogatory companion matrices of its elementary divisors. Combine the $0$'s in the main diagonal of $A$ with each of these invertible and non-derogatory companion matrices and utilize Theorem \ref{teoremappal} to get characteristic polynomials with nonzero zero-degree coefficients by adding appropriate square-zero matrices, as desired.
\end{proof}

Recall that a matrix $P\in \mathbb{M}_{n}(\mathbb{F})$ is {\it $m$-potent} if $P^m=P$ for some $1<m\in\mathbb{N}$; a matrix $T\in \mathbb{M}_{n}(\mathbb{F})$ is {\it $k$-torsion} (or just a {\it $k$-root of the unity}) if $T^k=\Id$ for some $k\in\mathbb{N}$.

\begin{corollary}\label{consequences}
Let $\mathbb{F}$ be a field, let $n,k\in \mathbb{N}$ with $k<n-k$, and consider the block matrix
$$
A=\left(
         \begin{array}{c|c}
           {\bf 0}_{k,k} & {\bf 0}_{k,n-k} \\
           \hline
           {\bf 0}_{n-k,k} & A_{22} \\
         \end{array}
       \right)\in \mathbb{M}_{n}(\mathbb{F})
$$
consisting of $k$ rows and columns of zeros and an invertible non-derogatory matrix $A_{22}$.
If the trace of $A$ is zero, then the following two statements are true:
\begin{itemize}
\item[(i)] if $n\ge 3$, there exists an $n$-potent matrix $P$ and a square-zero matrix $N_1$ such that $A=P+N_1$;
\item[(ii)] if $n\ge 2$, there exists an $n$-torsion matrix $T$ and a square-zero matrix $N_2$ such that $A=T+N_2$;
 \end{itemize}
\end{corollary}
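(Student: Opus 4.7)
My plan is to reduce both parts to direct applications of Theorem \ref{teoremappal} combined with the Cayley--Hamilton theorem, choosing the target characteristic polynomial so that its formal identity forces the desired algebraic property ($n$-potency or $n$-torsion). The decisive observation is that, since $A$ has trace zero by hypothesis, any monic polynomial of degree $n$ whose $x^{n-1}$-coefficient vanishes is admissible as a target in Theorem \ref{teoremappal}, so it only remains to pick the right such polynomial.

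For part (i), I would take $q_1(x):=x^n-x$. When $n\ge 3$, the coefficient of $x^{n-1}$ in $q_1$ is zero, so the trace of $q_1$ matches the trace of $A$. Applying Theorem \ref{teoremappal} produces a square-zero matrix $N$ such that the characteristic polynomial of $A+N$ equals $x^n-x$, and Cayley--Hamilton then gives $(A+N)^n=A+N$. Hence $P:=A+N$ is $n$-potent, and writing $N_1:=-N$ (still square-zero) yields the required decomposition $A=P+N_1$.

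For part (ii), the same strategy works with $q_2(x):=x^n-1$. For $n\ge 2$ the coefficient of $x^{n-1}$ in $q_2$ is zero, so $q_2$ has trace zero. Theorem \ref{teoremappal} furnishes a square-zero $N$ for which the characteristic polynomial of $A+N$ is $x^n-1$, and Cayley--Hamilton yields $(A+N)^n=\Id$. Thus $T:=A+N$ is $n$-torsion, and $N_2:=-N$ gives $A=T+N_2$ with $N_2$ square-zero.

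The dimension hypotheses in the statement are explained by these choices: the polynomial $x^n-x$ only has vanishing $x^{n-1}$-coefficient for $n\ge 3$ (for $n=2$ the $-x$ term \emph{is} the $x^{n-1}$-term), which accounts for the restriction in (i); no such obstruction arises for $x^n-1$, matching the bound $n\ge 2$ in (ii). I do not foresee any real obstacle: once the correct target polynomial is identified, the work is entirely done by Theorem \ref{teoremappal} and Cayley--Hamilton, with only a sign flip needed to pass from $P=A+N$ (or $T=A+N$) to the required $A=P+N_1$ (or $A=T+N_2$).
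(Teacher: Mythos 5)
Your proposal is correct and matches the paper's own proof essentially verbatim: the paper also prescribes $x^n-x$ and $x^n-1$, invokes Theorem \ref{teoremappal}, and writes $A=(A+N_i)-N_i$. Your explicit appeal to Cayley--Hamilton and your explanation of the bounds $n\ge 3$ and $n\ge 2$ are just slightly more detailed versions of what the paper leaves implicit.
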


\begin{proof} One observes that it suffices to prescribe the zero-trace polynomials $p_1(x)=x^n-x$ and $p_2(x)=x^n-1$ and employ Theorem \ref{teoremappal} to get square-zero matrices $N_1,N_2$ such that $A+N_i$ has characteristic polynomial equal to $p_i(x)$, $i=1,2$. Then, decompose $A=(A+N_1)-N_1$ to get (i) and $A=(A+N_2)-N_2$ to get (ii), as wanted.
\end{proof}

\medskip
\medskip
\medskip

\noindent{\bf Funding:} The first-named author (Peter Danchev) was supported in part by the BIDEB 2221 of T\"UB\'ITAK; the second and third-named authors (Esther Garc\'{\i}a and Miguel G\'omez Lozano) were partially supported by Ayuda Puente 2023, URJC, and MTM2017-84194-P (AEI/FEDER, UE). The all three authors were partially supported by the Junta de Andaluc\'{\i}a FQM264.

\vskip3.0pc

\bibliographystyle{plain}

\begin{thebibliography}{100}

\bibitem{BrC}
S.~Breaz and G.~Călugăreanu.
\newblock Sums of nilpotent matrices.
\newblock {\em Linear and Multilinear Algebra}, {\bf 65}(1):67--78, 2017.

\bibitem{C}
G.~Cravo.
\newblock Matrix Completion Problems.
\newblock {\em Linear Algebra Appl.}, {\bf 430}:2511--2540, 2009.

\bibitem{DGL1}
P.~Danchev, E.~Garc\'ia and M.~G\'omez Lozano.
\newblock Decompositions of matrices into diagonalizable and square-zero matrices.
\newblock {\em Linear and Multilinear Algebra}, {\bf 70}(19):4056--4070, 2022.

\bibitem{DGL2}
P.~Danchev, E.~Garc\'ia and M.~G. Lozano.
\newblock Decompositions of matrices into potent and square-zero matrices.
\newblock {\em Internat. J. Algebra Comput.}, {\bf 32}(2):251--263, 2022.

\bibitem{DGL3}
P.~Danchev, E.~Garc\'ia and M.~G. Lozano.
\newblock Decompositions of matrices into a sum of torsion matrices and matrices of fixed nilpotence.
\newblock {\em Linear Algebra Appl.}, {\bf 676}:44--55, 2023.

\bibitem{DGL4}
P.~Danchev, E.~Garc\'ia and M.~G\'omez Lozano.
\newblock Decompositions of matrices into a sum of invertible matrices and matrices of fixed nilpotence.
\newblock {\em Electron. J. Linear Algebra}, {\bf 39}:460--471, 2023.

\bibitem{FL}
H.K.~Farahat and W.~Lendermann.
\newblock Matrices with prescribed characteristic polynomial.
\newblock {\em Proc. Edimburgh Math. Soc.}, {\bf 11}:143--146, 1958/1959.


\bibitem{O71}
G.N. Oliveira.
\newblock Matrices with prescribed characteristic polynomial and a prescribed submatrix III.
\newblock {\em Monatsh. Math.}, {\bf 75}:441--446, 1971.

\bibitem{O75}
G.N. Oliveira.
\newblock Matrices with prescribed characteristic polynomial and several prescribed submatrices.
\newblock {\em Linear and Multilinear Algebra}, {\bf 2}:357--364, 1975.

\bibitem{O81}
G.N. Oliveira.
\newblock Matrices with prescribed characteristic polynomial and principal blocks.
\newblock {\em Proc. Edinburgh Math. Soc.}, {\bf 24}:203--208, 1981.

\bibitem{O82}
G.N. Oliveira.
\newblock  Matrices with prescribed characteristic polynomial and principal blocks II.
\newblock {\em Linear Algebra Appl.}, {\bf 47}:35--40, 1982.

\end{thebibliography}

\end{document}